\documentclass[10pt]{amsart}

\usepackage{amsmath,amsthm,amsfonts,amscd,amssymb,soul}
\usepackage{hyperref,wasysym}
\usepackage{verbatim}
\usepackage{amssymb}
\usepackage{graphicx}
\usepackage{color}

\newtheorem{thm}{Theorem}[section]
\newtheorem{cor}[thm]{Corollary}

\newtheorem{prop}[thm]{Proposition}

\theoremstyle{definition}

\theoremstyle{remark}
\newtheorem{rem}{Remark}[section]

\begin{document}

\title{On permutation-invariant construction of glued lattices}

\author{Maria Fernanda Zordan Bonini}
\author{Lenny Fukshansky}

\address{Departamento de Matem\'atica, S\~ao Paulo State University, R. Crist\'ov\~ao Colombo, 2265 - Jardim Nazareth, S\~ao Jos\'e do Rio Preto, 15054-000, Brazil}
\email{maria.bonini@unesp.br}
\address{Department of Mathematics, 850 Columbia Avenue, Claremont McKenna College, Claremont, CA 91711, USA}
\email{lenny@cmc.edu}

\subjclass[2020]{Primary: 11H06, 11H56, 11R04, 11R09}
\keywords{permutation-invariant lattices, cyclic lattices, glued lattices, root lattices, number field lattices}

\begin{abstract} Given a permutation $\tau$ on $n$ letters, we consider lattices spanned by an orbit of one vector $\boldsymbol x$ in $\mathbb R^n$ under the action of $\tau$ by permutation of the coordinates. Such lattices generalize the important class of cyclic lattices and have previously been studied in~\cite{perm}, where a bound on their rank was established. We prove a sufficient condition on $\boldsymbol x$ for this bound to be achieved. We further investigate the structure of such permutation-invariant lattices, proving that they are glued by the permuted vector from the orthogonal cyclic blocks and giving a determinant formula for the lattice in terms of determinants of these blocks and the norm of the permuted vector. In the case $\boldsymbol x$ is an integer vector, these blocks are sublattices of the root lattices $A_k$ in respective dimensions with root lattices themselves and their glued direct sums also realizable by this construction. We also exhibit a glued construction of permutation-invariant algebraic integral lattices from collections of cyclic number fields. Finally, we prove a strengthened version of a previous result of~\cite{lf_ek} on a related construction of well-rounded lattices spanned by sets of algebraic conjugates.
\end{abstract}

\maketitle

\def\A{{\mathcal A}}
\def\B{{\mathcal B}}
\def\C{{\mathcal C}}
\def\D{{\mathcal D}}
\def\F{{\mathcal F}}
\def\x{{\mathcal H}}
\def\I{{\mathcal I}}
\def\J{{\mathcal J}}
\def\K{{\mathcal K}}
\def\L{{\mathcal L}}
\def\M{{\mathcal M}}
\def\N{{\mathcal N}}
\def\O{{\mathcal O}}
\def\R{{\mathcal R}}
\def\s{{\mathcal S}}
\def\V{{\mathcal V}}
\def\W{{\mathcal W}}
\def\X{{\mathcal X}}
\def\Y{{\mathcal Y}}
\def\H{{\mathcal H}}
\def\Z{{\mathcal Z}}
\def\OO{{\mathcal O}}
\def\BB{{\mathbb B}}
\def\cee{{\mathbb C}}
\def\EE{{\mathbb E}}
\def\Nn{{\mathbb N}}
\def\pee{{\mathbb P}}
\def\que{{\mathbb Q}}
\def\real{{\mathbb R}}
\def\zed{{\mathbb Z}}
\def\hyp{{\mathbb H}}
\def\aa{{\mathfrak a}}
\def\HH{{\mathfrak H}}
\def\qbar{{\overline{\mathbb Q}}}
\def\eps{{\varepsilon}}
\def\ahat{{\hat \alpha}}
\def\bhat{{\hat \beta}}
\def\gt{{\tilde \gamma}}
\def\h{{\tfrac12}}
\def\b1{{\boldsymbol 1}}
\def\be{{\boldsymbol e}}
\def\bei{{\boldsymbol e_i}}
\def\bff{{\boldsymbol f}}
\def\ba{{\boldsymbol a}}
\def\bb{{\boldsymbol b}}
\def\bc{{\boldsymbol c}}
\def\bm{{\boldsymbol m}}
\def\bn{{\boldsymbol n}}
\def\bk{{\boldsymbol k}}
\def\bi{{\boldsymbol i}}
\def\bl{{\boldsymbol l}}
\def\bq{{\boldsymbol q}}
\def\bu{{\boldsymbol u}}
\def\bt{{\boldsymbol t}}
\def\bs{{\boldsymbol s}}
\def\bv{{\boldsymbol v}}
\def\bw{{\boldsymbol w}}
\def\bx{{\boldsymbol x}}
\def\bX{{\boldsymbol X}}
\def\bz{{\boldsymbol z}}
\def\bwy{{\boldsymbol y}}
\def\bY{{\boldsymbol Y}}
\def\bL{{\boldsymbol L}}
\def\baa{{\boldsymbol\alpha}}
\def\bbb{{\boldsymbol\beta}}
\def\bgg{{\boldsymbol\gamma}}
\def\bet{{\boldsymbol\eta}}
\def\bxi{{\boldsymbol\xi}}
\def\bo{{\boldsymbol 0}}
\def\bol{{\boldkey 1}_L}
\def\ep{\varepsilon}
\def\p{\boldsymbol\varphi}
\def\q{\boldsymbol\psi}
\def\rank{\operatorname{rank}}
\def\aut{\operatorname{Aut}}
\def\lcm{\operatorname{lcm}}
\def\sgn{\operatorname{sgn}}
\def\spn{\operatorname{span}}
\def\md{\operatorname{mod}}
\def\Norm{\operatorname{Norm}}
\def\dim{\operatorname{dim}}
\def\det{\operatorname{det}}
\def\Vol{\operatorname{Vol}}
\def\rk{\operatorname{rk}}
\def\Gal{\operatorname{Gal}}
\def\WR{\operatorname{WR}}
\def\WO{\operatorname{WO}}
\def\GL{\operatorname{GL}}
\def\pr{\operatorname{pr}}
\def\Tr{\operatorname{Tr}}
\def\dd{\partial}
\def\itt{\operatorname{int}}
\def\Ar{\operatorname{Area}}
\def\Aut{\operatorname{Aut}}
\def\ot{\operatorname{o}_{\tau}}

\section{Introduction}
\label{intro}

Let $n \geq 2$ and write $S_n$ for the permutation group on $n$ letters. This group acts on $\real^n$ by permutation of the coordinates. Taking the standard representation of $S_n$ in $\GL_n(\zed)$ via permutation matrices, we can also view this action as left multiplication. Fix an element $\tau \in S_n$, then for each $\bx \in \real^n$ we write $\tau(\bx)$ for the image of $\bx$ under the action by $\tau$. A lattice $L \subset \real^n$ (i.e., a discrete subgroup) is called {\it $\tau$-invariant} if $\tau(L) = L$, which is equivalent to saying that $\tau$ is in $\Aut(L)$, the automorphism group of $L$, defined as
$$\Aut(L) = \left\{ U \in \GL_n(\zed) : (U\bx)^{\top} (U\bwy) = \bx^{\top} \bwy,\ \forall\ \bx,\bwy \in L \right\}.$$
We refer to all such lattices as {\it permutation-invariant}; they generalize the class of {\it cyclic} lattices, which are the $\tau$-invariant lattices for $\tau = (1\ \dots\ n)$. Cyclic lattices were introduced by D. Micciancio in the context of lattice cryptography~\cite{mic1}, \cite{mic2} and then studied by many other authors. Permutation-invariant lattices were studied in~\cite{perm} with a special focus on lattices of the form
$$L_{\tau}(\bx) = \spn_{\zed} \left\{ \tau^k(\bx) : 0 \leq k \leq |\tau|-1 \right\},$$
where $\bx \in \real^n$ and $|\tau|$ is the order of the element $\tau$ in the group $S_n$. We will call such lattices {\it simple $\tau$-invariant}, generated by permutations of $\bx$ under $\tau$; this is a generalization of the {\it simple cyclic} lattices that were studied in~\cite{lf_dk}. Simple cyclic and, more generally, simple permutation-invariant lattices are especially useful for cryptographic applications due to their very compact description as orbits of just one vector under a given permutation. A just-announced breakthrough result by D. Wan~\cite{d_wan} asserts that the Shortest Vector Problem (SVP) is NP-hard on cyclic (and hence, on permutation-invariant) lattices, establishing them as excellent candidates for secure crypto-scheme design. Further, cyclic lattices are naturally analogous to cyclic codes construction in coding theory (see, e.g. Chapters~7-8 of~\cite{macwillians_sloane}); similarly, permutation-invariant lattices are analogous to the more recently introduced class of generalized quasi-cyclic codes (see, e.g, \cite{gqc}). The goal of this paper is to study the structure of simple permutation-invariant lattices.

For each $\bx \in \real^n$, the lattice $L_{\tau}(\bx)$ is $\tau$-invariant by construction, however its rank can vary. Define
$$\ot = \max \left\{ \rk L_{\tau}(\bx) : \bx \in \real^n \right\},$$
then Lemma~2.1 of~\cite{perm} asserts that $\ot \leq n-\ell+1$, where $\ell$ is the number of cycles in a disjoint cycle decomposition of $\tau$; the bound is achieved if these cycles have pairwise relatively prime lengths. The first question we want to address is for which vectors $\bx \in \real^n$ is $\rk L_{\tau}(\bx) = \ot$?

Let $\tau = c_1 \cdots c_{\ell}$ be the disjoint cycle decomposition for $\tau$ with $k_j = |c_j|$ for each $1 \leq j \leq \ell$, ordered so that $k_1 \leq \dots \leq k_\ell$. Then $\sum_{j=1}^{\ell} k_j = n$, and for a vector $\bx \in \real^n$ we write
$$\bx = \begin{pmatrix} \bx_1 \\ \vdots \\ \bx_{\ell} \end{pmatrix},\text{ so } \tau(\bx) = \begin{pmatrix} c_1(\bx_1) \\ \vdots \\ c_{\ell}(\bx_{\ell}) \end{pmatrix},$$
where each block $\bx_j = (x_{j1}, \dots, x_{j k_j})^{\top} \in \real^{k_j}$. For each $1 \leq j \leq \ell$, define
$$d(\bx_j) = \dim_{\que} \spn_{\que} \{ x_{j1},\dots,x_{jk_j} \},$$
and analogously for any vector with real coordinates. With this notation, we can state our first observation.

\begin{prop} \label{ind} If $d(\bx_j) \geq k_j$ for each $1 \leq j \leq \ell$, then $\rk L_{\tau}(\bx) = \ot$.
\end{prop}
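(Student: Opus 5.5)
The plan is to reduce the question to linear algebra over $\real[t]$, with $\tau$ acting as a linear operator $T$ on $\real^n$. I take $\rk L_{\tau}(\bx)$ to mean $\dim_{\real} \spn_{\real}\{\tau^k(\bx)\}$; this agrees with the lattice rank whenever $L_\tau(\bx)$ is discrete. The real span of the orbit $\{T^k\bx\}$ is the cyclic subspace $\real[T]\bx$. Its dimension equals the degree of the monic generator $\mu_{\bx}(t)$ of the annihilator ideal $\{f \in \real[t] : f(T)\bx = \bo\}$.

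\emph{Step 1: an upper bound valid for every vector.} Because $T$ acts blockwise, with $c_j$ acting as the cyclic shift on $\real^{k_j}$, we have $f(T)\bx=\bo$ if and only if $f(c_j)\bx_j=\bo$ for all $j$. Hence
\[
\mu_{\bx} = \lcm_j \mu_{\bx_j}.
\]
Each $\mu_{\bx_j}$ divides $t^{k_j}-1$, since $c_j^{k_j}$ is the identity. Consequently, for every $\bwy\in\real^n$,
\[
\rk L_\tau(\bwy) \le \deg \lcm_j (t^{k_j}-1) =: N.
\]
So $\ot \le N$, and it suffices to show $\deg \mu_{\bx} = N$ for our $\bx$.

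\emph{Step 2: each block generates all of $\real^{k_j}$.} I claim that under the hypothesis $d(\bx_j)\ge k_j$ (that is, the coordinates of $\bx_j$ are $\que$-linearly independent) the circulant vectors $c_j^m(\bx_j)$, $0\le m<k_j$, are linearly independent. The circulant determinant factors as $\prod_{\omega^{k_j}=1} f_j(\omega)$, up to sign, where $f_j(t)=\sum_i x_{ji}t^{i-1}$. Suppose $f_j(\omega)=0$ for some root of unity $\omega$. Choose a $\que$-basis $\beta_1,\dots,\beta_s$ of $\que(\omega)$ and write $\omega^{i-1}=\sum_r a_{ir}\beta_r$ with $a_{ir}\in\que$. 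Since the $\beta_r$ are linearly independent over $\que$, they are also independent over $\real$ as elements of $\cee$. Here I would double-check this step: linear disjointness of $\que(\omega)$ and $\real$ over $\que$ fails when $\omega$ is non-real, so one should instead split into real and imaginary parts. The safer route is to use that $\que(\omega)\otimes_\que\real$ embeds into $\cee^{\text{(embeddings)}}$ and apply all Galois conjugates. Concretely, if $f_j(\omega)=0$ then $f_j(\sigma\omega)=0$ for all $\sigma\in\Gal(\que(\omega)/\que)$, because the $x_{ji}$ are real and fixed. Summing $\sigma(\omega^{i-1}\beta_r^{-1})$-type trace combinations then gives the relations $\sum_i x_{ji}\Tr(\omega^{i-1}\gamma)=0$ for all $\gamma\in\que(\omega)$. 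Taking $\gamma=1$ gives a rational relation whose coefficient of $x_{j1}$ is $\Tr(1)=[\que(\omega):\que]\neq 0$. This contradicts $\que$-independence, so the circulant is nonsingular. Therefore $\real[c_j]\bx_j=\real^{k_j}$, and hence $\mu_{\bx_j}=t^{k_j}-1$.

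\emph{Step 3: conclusion.} By Step 1, $\mu_{\bx}=\lcm_j(t^{k_j}-1)$, so $\rk L_\tau(\bx)=N\ge \ot$. Combined with the upper bound $\ot\le N$, this gives $\rk L_\tau(\bx)=\ot$, and incidentally identifies $\ot=N$. This is consistent with Lemma~2.1 of~\cite{perm}: when the $k_j$ are pairwise coprime, the only common root of the polynomials $t^{k_j}-1$ is $1$, so $N=n-\ell+1$.

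The main obstacle is Step~2, namely converting a vanishing $f_j(\omega)$ with real coefficients into a genuine $\que$-linear relation among the $x_{ji}$. The trace argument with $\gamma=1$ is what I would try first. The extra point to nail down is that the relation obtained is nontrivial, which holds because its coefficient on $x_{j1}$ is nonzero.
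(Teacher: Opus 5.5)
The gap is in Step~2, and it cannot be repaired under the reading of $\rk$ you chose. The assertion that $f_j(\omega)=0$ forces $f_j(\sigma\omega)=0$ for every $\sigma\in\Gal(\que(\omega)/\que)$ is unjustified. The automorphism $\sigma$ does not act on the real numbers $x_{ji}$, and real coefficients only make the zero set of $f_j$ closed under complex conjugation.

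In fact the conclusion of Step~2 is false. Take $k_j=5$, $\omega=e^{2\pi i/5}$, $c=\cos(2\pi/5)$, and set $\bu_1=(1,-2c,1,0,0)$, $\bu_2=(0,1,-2c,1,0)$, $\bu_3=(0,0,1,-2c,1)$. Each $\bu_m$ satisfies $\sum_i(\bu_m)_i\,\omega^{i-1}=0$, because $\omega+\omega^{-1}=2c$. Put
\[
\bx=\bu_1+\pi\bu_2+\pi^2\bu_3=(1,\ \pi-2c,\ 1-2c\pi+\pi^2,\ \pi-2c\pi^2,\ \pi^2).
\]
A rational relation $\sum_i a_ix_i=0$ splits along $1,\pi,\pi^2$, which are linearly independent over $\que(\sqrt5)$. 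It becomes $a_1+a_3=2ca_2$, $a_2+a_4=2ca_3$, $a_3+a_5=2ca_4$. Since $2c\notin\que$, all $a_i=0$, so $d(\bx)=5$.

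Yet $f(\omega)=f(\bar\omega)=0$ for $f(t)=\sum_i x_it^{i-1}$. Hence for $\tau=(1\,2\,3\,4\,5)$ the real span of the orbit of $\bx$ has dimension at most $3<5=\ot$. The same failure occurs as soon as some $k_j\notin\{2,3,4,6\}$. For $k_j\in\{2,3,4,6\}$ every $k_j$-th root of unity has rational real part, so the real part of $f_j(\omega)=0$ is already a nontrivial rational relation; for other $k_j$ it is not.

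So with $\rk$ meaning $\dim_\real\spn_\real$, the proposition itself is false. In this example $L_\tau(\bx)$ is a non-discrete group of $\zed$-rank $5$ sitting inside a $3$-dimensional space.

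The hypothesis $d(\bx_j)\ge k_j$ only excludes rational relations. Step~2 needs to exclude real relations with algebraic coefficients, which the hypothesis cannot do. The statement is correct for the $\zed$-rank of $L_\tau(\bx)$, and hence for its dimension whenever $L_\tau(\bx)$ is discrete.

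Your Steps~1 and~3 survive if you work over $\que[t]$ instead of $\real[t]$. Suppose $p\in\que[t]$ and $p(\tau)\bx=\bo$. Reduce $p$ modulo $t^{k_j}-1$ to $\sum_{r<k_j}q_rt^r$ with $q_r\in\que$. The coordinates of $p(c_j)\bx_j$ are then rational combinations $\sum_r q_r x_{j,\,i+r}$, with indices taken mod $k_j$. By $\que$-independence all $q_r=0$, so $(t^{k_j}-1)\mid p$ for each $j$. Hence $\lcm_j(t^{k_j}-1)\mid p$, and $\dim_\que\que[\tau]\bx=N$.

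This is exactly the paper's proof. It starts from an integer relation $\sum_k a_k\tau^k(\bx)=\bo$, groups the coefficients into $q_r=\sum_{k\equiv r\,(\md\,k_j)}a_k$, and kills them using $\que$-independence.
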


\noindent
We prove Proposition~\ref{ind} in Section~\ref{independent}. The hypothesis of this proposition is a sufficient condition for $\rk L_{\tau}(\bx)$ to be equal to $\ot$, however it is not necessary. In fact, some of the constructions of such lattices we show in Section~\ref{root} do not satisfy this condition. Our main result describes the structure of simple permutation-invariant lattices.

\begin{thm} \label{glue} Let $2 \leq k_1 < \dots < k_{\ell}$ be pairwise relatively prime integers, $n = \sum_{i=1}^{\ell} k_i$, and $\tau = c_1 \dots c_{\ell} \in S_n$, where $c_1,\dots,c_{\ell}$ are disjoint cycles of lengths $k_1,\dots,k_{\ell}$, respectively. Let $\bx \in \real^n$ be a vector such that $\rk L_{\tau}(\bx) = n-\ell+1$. Then
\begin{equation}
\label{Ltaux}
L_{\tau}(\bx) = \spn_{\zed} \left\{ \bx, \bigoplus_{i=1}^{\ell} M_i \right\},
\end{equation}
where each of the mutually orthogonal blocks $M_i$ is a cyclic lattice in the corresponding space $\real^{k_i}$ embedded as the respective coordinate subspace of $\real^n$ and $\bx \notin \bigoplus_{i=1}^{\ell} M_i$. Further, $\rk M_i = k_i-1$ for each $1 \leq i \leq \ell$ and 
$$M_i \subset \b1_{k_i}^{\perp} := \left\{ \bz \in \real^{k_i} : \sum_{j=1}^{k_i} z_j = 0 \right\}.$$
Conversely, given a collection of simple cyclic lattices $M_i \subset \b1_i^{\perp}$, $1 \leq i \leq \ell$ with $\rk M_i = k_i-1$, there exist infinitely many vectors $\bx \in \real^n$ for which~\eqref{Ltaux} holds so that $\bx \notin \bigoplus_{i=1}^{\ell} M_i$ and $\rk L_{\tau}(\bx) = n-\ell+1$. 
\end{thm}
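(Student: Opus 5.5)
We set $N = |\tau| = k_1\cdots k_\ell$ and start from the generators $\tau^m(\bx)$, $0 \le m \le N-1$. Since the $k_i$ are pairwise coprime, the Chinese Remainder Theorem will do most of the work.

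\textbf{Constructing the blocks $M_i$.} For each $i$ we choose $m_i$ with $m_i \equiv 0 \pmod{k_j}$ for all $j \ne i$ and $m_i \equiv 1 \pmod{k_i}$. Then $\tau^{m_i}$ acts as $c_i$ on the $i$-th block and trivially on the others. Hence
$$\tau^{m_i s}(\bx) - \bx = \bigl(0,\dots, c_i^s(\bx_i)-\bx_i,\dots,0\bigr)$$
lies in $L_\tau(\bx)$, and its only nonzero block is the $i$-th. We define $M_i$ as the $\zed$-span of the vectors $c_i^s(\bx_i)-\bx_i$ for $0\le s<k_i$, placed in the $i$-th coordinate subspace. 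Each such vector has coordinate sum zero, so $M_i\subset \b1_{k_i}^\perp$. Each $M_i$ is $c_i$-invariant, because
$$c_i\bigl(c_i^s\bx_i-\bx_i\bigr) = \bigl(c_i^{s+1}\bx_i-\bx_i\bigr) - \bigl(c_i\bx_i-\bx_i\bigr).$$
So $M_i$ is a cyclic lattice. The blocks have disjoint supports, so they are mutually orthogonal, and this gives the inclusion $\spn_\zed\{\bx,\bigoplus M_i\}\subseteq L_\tau(\bx)$.

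\textbf{Reverse inclusion.} Every generator can be written as
$$\tau^m(\bx) = \bx + \sum_i \bigl(c_i^{m}(\bx_i)-\bx_i\bigr),$$
where the $i$-th summand is placed in block $i$. Each summand lies in $M_i$, so equality \eqref{Ltaux} holds.

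\textbf{Ranks.} Since $\rk M_i \le \dim \b1_{k_i}^\perp = k_i-1$, we get
$$n-\ell+1 = \rk L_\tau(\bx) \le 1+\sum_i \rk M_i \le 1+n-\ell.$$
Equality is therefore forced throughout. This gives $\rk M_i = k_i-1$ for every $i$, and it shows that $\bx\notin \bigoplus M_i\otimes\real$, so in particular $\bx\notin\bigoplus M_i$.

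\textbf{Converse.} Suppose $M_i = L_{c_i}(\by_i)$ is simple cyclic of rank $k_i-1$ inside $\b1^\perp$. We want $\bx$ with $\spn_\zed\{c_i^s\bx_i-\bx_i\} = M_i$ for each $i$, and with the total sum of coordinates of $\bx$ nonzero. If we take $\bx_i$ with $c_i\bx_i-\bx_i = \by_i$, then the differences $c_i^s\bx_i-\bx_i = \by_i + c_i\by_i+\dots+c_i^{s-1}\by_i$ generate exactly $L_{c_i}(\by_i)$. Such an $\bx_i$ exists because $c_i - I$ maps $\real^{k_i}$ onto $\b1^\perp$, and $\by_i\in\b1^\perp$. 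The solution is unique up to adding $t_i\b1_{k_i}$, so we have a free parameter $t_i\in\real$ for each block. The total coordinate sum of $\bx$ is $\sum_i(\sigma_i + k_i t_i)$, where $\sigma_i$ is the coordinate sum of the particular solution. Choosing the $t_i$ so that this sum is nonzero puts $\bx$ outside $\b1^\perp\supseteq\bigoplus M_i$. Then $\rk L_\tau(\bx) = n-\ell+1$, and the preceding argument gives \eqref{Ltaux}. There are infinitely many admissible choices of $(t_i)$.

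\textbf{Main obstacle.} The delicate step is the converse: it requires checking that $c_i - I$ maps onto $\b1^\perp$ with kernel $\real\b1$, and that the differences generate exactly $M_i$, not merely a sublattice of it. The telescoping identity above handles the latter.
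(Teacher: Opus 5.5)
Your proof is correct and takes essentially the paper's route: Chinese Remainder powers of $\tau$ isolate each cycle and give difference vectors spanning the blocks $M_i$, and for the converse you solve $c_i(\bx_i)-\bx_i=\boldsymbol y_i$ block by block, which is the paper's partial-sum construction of $\bx_i$ with your free constants $t_i$ playing the role of its $\alpha_i$. Your write-up is more complete than the paper's: it verifies the inclusion $L_{\tau}(\bx)\subseteq\spn_{\zed}\{\bx,\bigoplus_i M_i\}$, derives $\rk M_i=k_i-1$ and $\bx\notin\bigoplus_i M_i$ from a rank count rather than asserting linear independence, and chooses the constants so that $\bx$ stays off $\spn_{\real}\bigoplus_i M_i$, which the paper's ``any real numbers $\alpha_i$'' does not guarantee.
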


\noindent
We refer to $L_{\tau}(\bx)$ as the lattice {\it glued} from the blocks $M_i$ with the {\it glue vector}~$\bx$ (see Section~4.3 of~\cite{conway_sloane} for the details of gluing theory). Hence, the $\tau$-permutation orbit of the glue vector $\bx$ results in the glued lattice construction. We prove Theorem~\ref{glue} in Section~\ref{root}, where we also describe in details the construction of the blocks $M_i$ from the glue vector $\bx$. Further, we give a formula for the determinant of $L_{\tau}(\bx)$ in terms of the determinants of the blocks and norm of $\bx$ as well as briefly discuss the successive minima of $L_{\tau}(\bx)$ in Remark~\ref{invariants}. In the case when $\bx \in \zed^n$, these blocks lie in the {\it root lattices}
$$A_{k_i-1} = \left\{ \bz \in \zed^{k_i} : \sum_{j=1}^{k_i} z_j = 0 \right\},$$
which are generated by the {\it roots}, i.e., vectors of squared Euclidean norm $2$. We demonstrate a specific example of a glue vector resulting in the blocks being equal to $A_{k_i-1}$, in which case the determinant and successive minima are particularly easy to compute.

On the other hand, we can produce infinite families of simple permutation-invariant lattices satisfying the condition of Proposition~\ref{ind}. A lattice is called {\it integral} if the inner product of any two vectors in it is in $\zed$; a lattice is called {\it algebraic} if it is the image of a free $\zed$-module in a number field under the Minkowski embedding of this number field into a Euclidean space with the trace-induced Euclidean norm. In Section~\ref{nf_sec}, we review the necessary number field notation and describe infinite families of integral simple permutation-invariant algebraic lattices satisfying the hypothesis of Proposition~\ref{ind}. They are also glued together from cyclic algebraic orthogonal blocks, as described in Theorem~\ref{glue}. Finally, in the Appendix (Section~\ref{conj}) we revisit the related construction of lattices spanned by full sets of algebraic conjugates, considered in~\cite{lf_ek} and prove a strengthening of the main result of~\cite{lf_ek}. We are now ready to proceed.
\bigskip

\section{Permuting vectors with linearly independent coordinates}
\label{independent}

In this section, we prove Proposition~\ref{ind}. Let $m=\ot$ and suppose that $\rk L_{\tau}(\bx)$ is smaller than $m$, then there exist integer coefficients $a_0,\dots,a_{m-1}$ such that
$$\sum_{k=0}^{m-1} a_k \tau^k(\bx) = \bo,$$
meaning that
\begin{equation}
\label{sum_cycle-1}
\sum_{k=0}^{m-1} a_k c_j^k(\bx_j) = \bo,\ \forall\ 1 \leq j \leq \ell.
\end{equation}
Grouping together terms with the same coordinate $x_{ji}$ for each $1 \leq j \leq \ell$, $1 \leq i \leq k_j$ in each equation of this linear system, we obtain a system of equations in $x_{ji}$ with coefficients being sums of $a_k$'s. Namely,
$$q_0x_{j1} + q_1x_{j2} + \dots + q_{k_j-1}x_{jk_j} = 0,$$
where $q_r = \sum_{t \equiv r (\md k_j)} a_t$ for each $0 \leq r \leq k_j-1$. For example, if $\ell=2$, $k_1=2$, $k_2 = 3$, our system becomes:
\begin{align*}
(a_0+a_2)x_{11} + (a_1+a_3) x_{12} = 0, (a_1+a_3)x_{11} + (a_0+a_2) x_{12} = 0,\\
(a_0+a_3)x_{21} + a_1 x_{22} + a_2x_{23} = 0, a_1x_{21} + a_2x_{22} + (a_0+a_3)x_{23} = 0,\\
a_2x_{21} + (a_0+a_3)x_{22} + a_1x_{23} = 0.
\end{align*}
Since $d(\bx_j) \geq k_j$ for each $1 \leq j \leq \ell$, we must have all the coefficients (which are sums of $a_k$'s independent of $x_{ji}$'s) in this system equal to $0$. For instance, in the example above, it amounts to
$$a_0+a_2=0,\ a_1+a_3 = 0,\ a_0+a_3 = 0,\ a_1 = 0,\ a_2 = 0.$$
This means that there exist integer coefficients $a_0,\dots,a_{m-1}$, not all zero, such that the corresponding linear system is identically zero for any choice of $\bx \in \real^n$. This means that 
$$\max \left\{ \rk L_{\tau}(\bx) : \bx \in \real^n \right\} < m,$$
contradicting the definition of $\ot$. Hence, we must have $\rk L_{\tau}(\bx) = \ot$ for each 
$$\bx = (\bx_1, \dots, \bx_{\ell})^{\top} \in \real^{k_1} \times \dots \times \real^{k_{\ell}}$$
with $d(\bx_j) \geq k_j$.

\bigskip

\section{Gluing construction}
\label{root}

In this section we describe the structure of simple permutation-invariant lattices in $\real^n$, proving Theorem~\ref{glue}. Let $\tau = c_1 \cdots c_{\ell} \in S_n$ with $k_i = |c_i|$ for each $1 \leq i \leq \ell$ and $\gcd(k_i,k_j) = 1$ for $i \neq j$. Let
$$\bx = \left( \bx_1,\dots, \bx_{\ell} \right)^{\top} \in \real^n \text{ with each } \bx_i \in \real^{k_i}$$
be such that $\rk L_{\tau}(\bx) = \ot$. Since the cycle lengths are relatively prime, the Chinese Remainder Theorem implies that for each $1 \leq i \leq \ell$ and $1 \leq t(i) \leq k_i-1$ there exists a power $0 \leq m_{it(i)} < k_1 \cdots k_{\ell} = |\tau|$ such that $c_j^{m_{it(i)}} = c_j^{k_j}$ for all $j \neq i$ and $c_i^{m_{it(i)}} = c_i^{t(i)}$. This means that the vectors $\bx$ and $\tau^{m_{it(i)}}(\bx)$ will have different coordinates only in the $i$-th block. Then define
\begin{equation}
\label{diff_vector}
\bz_{it(i)} = \bx - \tau^{m_{it(i)}}(\bx) = \left( \bo,\dots,\bo, \bx_i-c_i^{t(i)}(\bx_i), \bo, \dots,\bo \right) \in L_{\tau}(\bx).
\end{equation}
Notice that the coordinates of each $\bx_i-c_i^{t(i)}(\bx_i)$ (and hence, the coordinates of $\bz_{it(i)}$) add up to $0$. In particular, this means that for $\bx \in \zed^n$, each $\bz_{it(i)} \in A_{k_i-1}$, the corresponding root lattice. All the vectors $\bz_{it(i)}$ are linearly independent and there are a total of $\sum_{i=1}^{\ell} (k_i-1) = n-\ell$ of them. Further,
\begin{eqnarray*}
L_{\tau}(\bx) & = & \spn_{\zed} \left\{ \bx, \bz_{it(i)} : 1 \leq i \leq \ell,\ 1 \leq t(i) \leq k_i-1 \right\} \\
& = & \spn_{\zed} \left\{ \bx, M_i : 1 \leq i \leq \ell \right\},
\end{eqnarray*}
where $M_i = \spn_{\zed} \left\{ \bz_{it(i)} : 1 \leq t(i) \leq k_i-1 \right\}$ for each $1 \leq i \leq \ell$ is invariant under the cyclic permutation $c_i$, and the blocks $M_i, M_j$ are mutually orthogonal for all $i \neq j$. Thus
\begin{equation}
\label{orth_span}
L_{\tau}(\bx) = \spn_{\zed} \left\{ \bx, \bigoplus_{i=1}^{\ell} M_i \right\}.
\end{equation}
In particular, if $\bx \in \zed^n$, then
$$L_{\tau}(\bx) \subseteq \spn_{\zed} \left\{ \bx, \bigoplus_{i=1}^{\ell} A_{k_i-1} \right\}.$$

In the reverse direction, suppose that for each $1 \leq i \leq \ell$, $M_i \subset \b1_i^{\perp} \subset \real^{k_i}$ is a cyclic lattice of rank $k_i-1$ and let $\bz_{i1},\dots,\bz_{i(k_i-1)}$ be a basis for $M_i$. Let us construct a vector $\bx \in \real^n$ so that~\eqref{orth_span} holds. Take any real numbers $\alpha_1,\dots,\alpha_{\ell}$, and for each $1 \leq i \leq \ell$, define the $i$-th block of $\bx$ to be
$$\bx_i = \left( \alpha_i, \alpha_i + z_{i2},\dots,\alpha_i + \sum_{l=1}^j z_{il},\dots,\alpha_i - z_{i1} \right)^{\top},$$
and let $\bx = (\bx_1,\dots,\bx_{\ell})^{\top}$. Using Chinese Remainder Theorem same way as above, we obtain vectors $(\bo,\dots,\bz_i,\dots,\bo)$ for each $1 \leq i \leq \ell$ as difference vectors of $\bx$ and its permutations by appropriate powers of $\tau$. This gives~\eqref{orth_span}.

\begin{rem} \label{invariants} One advantage of this family of simple permutation invariant lattices $L_{\tau}(\bx)$ is that some of their geometric invariants can be easy to compute. Recall that the {\it determinant} of $L_{\tau}(\bx)$ is the volume of its fundamental domain, which is equal to the absolute value of the determinant of any basis matrix, and the {\it successive minima} $0 < \lambda_1 \leq \dots \leq \lambda_{n-\ell+1}$ of $L_{\tau}(\bx)$ are defined as
$$\lambda_i = \min \left\{ t \in \real_{>0} : \dim_{\real} \spn_{\real} \left( \BB_{n-\ell+1}(t) \cap L_{\tau}(\bx) \right) \geq i \right\},$$
where $\BB_{n-\ell+1}(t)$ is a ball of radius $t$ centered at the origin in $\real^{n-\ell+1}$. Then determinant of $L_{\tau}(\bx)$ can be computed as the product of determinants of the cyclic orthogonal blocks times the norm of the projection of $\bx$ in the direction orthogonal to the hyperplane $V$ spanned by all those blocks, i.e.,
\begin{equation}
\label{det_formula}
\det L_{\tau}(\bx) = \left( \prod_{i=1}^{\ell} \det M_i \right) \|\bx\| \sin \theta,
\end{equation}
where $\theta$ is the angle $\bx$ makes with $V$. Further, if $\theta \in [\pi/3,\pi/2]$ (a sufficient, not necessary condition), then the successive minima of $L_{\tau}(\bx)$ are the successive minima of the orthogonal blocks and $\|\bx\|$.
\end{rem}

Let us consider a particular example. Let
$$\bx = \left( \be^1_1,\dots,\be^{\ell}_1 \right)^{\top} \in \zed^n,$$
where $\be^i_1$ is the first standard basis vector in $\real^{k_i}$. Then the difference vector $\bz_{it(i)}$ as in~\eqref{diff_vector} has first coordinate in the $i$-th block equal to $1$, $t(i)$-th coordinate in the $i$-th block equal to $-1$ and the rest of the coordinates equal to $0$. For each $1 \leq i \leq \ell$, the collection of $i$-th blocks of vectors $\bz_{it(i)}$ spans the root lattice $A_{k_i-1}$ with $\bz_{it(i)}$ being the spanning roots, and so
\begin{equation}
\label{A_blocks}
L_{\tau}(\bx) = \spn_{\zed} \left\{ \bx, \bigoplus_{i=1}^{\ell} A_{k_i-1} \right\}
\end{equation}
is glued from the root lattice blocks by the glue vector $\bx$. The situation is special when $\ell=2$. In this case, the glue vector~$\bx$ is also a root, as it has two coordinates equal to $1$ and the rest equal to $0$. Thus, the resulting lattice generated by $n-1$ roots is $L_{\tau}(\bx) = A_{n-1}$. Notice, in particular, that this last statement is independent of the lengths of the cycles $c_1$ and $c_2$. Hence, in the case $\ell=2$, the geometric properties of $L_{\tau}(\bx)$ are well-understood. In the case $\ell > 2$, we can apply~\eqref{det_formula} to obtain
$$\det L_{\tau}(\bx) = \left\{ \left( \prod_{i=1}^{\ell} k_i \right) \times \left( \sum_{i=1}^{\ell} \frac{1}{k_i} \right) \right\}^{1/2} = \left\{ \sum_{i=1}^{\ell} \left( \prod_{j=1, j \neq i}^{\ell} k_j \right) \right\}^{1/2},$$
since determinant of each block $A_{k_i-1}$ is $\sqrt{k_i}$ and the norm of the projection of $\bx$ in the direction orthogonal to the hyperplane $V$ spanned by all those blocks is given by $\sqrt{ \sum_{i=1}^{\ell} \frac{1}{k_i} }$. The first $n-\ell$ successive minima of $L_{\tau}(\bx)$ in this case are equal to $\sqrt{2}$, successive minima of the orthogonal blocks $A_{k_i-1}$, while the last one is $\sqrt{\ell} = \|\bx\|$.
\bigskip

\section{Gluing algebraic integral lattices}
\label{nf_sec}

In this section, we use the gluing construction of Section~\ref{root} in the specific context of lattices coming from number fields, exhibiting a construction of algebraic integral permutation-invariant glued lattices. We start with some notation. Let $f(x) \in \zed[x]$ be an irreducible monic polynomial of degree $n$ so that its Galois group $G(f) = \left< c \right> \leq S_n$ is cyclic and $c = (1\ \dots\ n)$ is the standard generating $n$-cycle. Let $\alpha_1,\dots,\alpha_n \in \qbar$ be the roots of $f(x)$ ordered so that $\alpha_j = c^{j-1}(\alpha_1)$ for $1 \leq j \leq n$. Then $K(f) = \que[x]/(f(x))$ is an extension of $\que$ of degree $n$, thus it is the splitting field of $f(x)$. Let $\sigma_1,\dots,\sigma_n : K(f) \to \cee$ be the embeddings of $K(f)$ with $r_1$ of them real and $2r_2$ complex, coming in conjugate pairs, i.e., $n=r_1+2r_2$. We identify the roots of $f(x)$ with the images of $\alpha_1$ under these embeddings, i.e., $\alpha_j = c^{j-1}(\alpha_1) = \sigma_j(\alpha_1)$ as a complex number, for all $1 \leq j \leq n$. The Minkowski embedding
$$\Sigma_{K(f)} = (\sigma_1,\dots,\sigma_n) : K(f) \hookrightarrow \cee^n$$
takes $K(f)$ into the space $K(f)_{\real} := K(f) \otimes_{\que} \real$, which can be viewed as a subspace of
$$\left\{ (\bx,\bwy) \in \real^{r_1} \times \cee^{2r_2} : y_{r_2+j} = \bar{y}_j\ \forall\ 1 \leq j \leq r_2 \right\} \cong \real^{r_1} \times \cee^{r_2} \subset \cee^n,$$
where in the last containment each copy of $\real$ is identified with the real part of the corresponding copy of $\cee$. This is a Euclidean space where the Euclidean inner product is induced by the number field trace in the sense that
$$\left< \Sigma_{K(f)}(\beta), \Sigma_{K(f)}(\gamma) \right> = \Tr_{K(f)}(\beta \bar{\gamma}) = \sum_{j=1}^n c^{j-1} (\beta \bar{\gamma}) \in \que,$$
for all $\beta,\gamma \in K(f)$. In particular, $\left< \Sigma_{K(f)}(\beta), \Sigma_{K(f)}(\gamma) \right> \in \zed$ for all $\beta,\gamma \in \O_{K(f)}$.

We write
$$\baa = \left( \alpha_1, \dots, \alpha_n \right)^{\top} \in K(f)_{\real}$$
for the image of $\alpha_1$ under $\Sigma_{K(f)}$ and let the $n$-cycle $c$ act on $\baa$ by permutation of the coordinates. Notice that, due to the ordering of the conjugates of $\alpha_1$ we chose,
$$\left< \baa, c^{j-1}(\baa) \right> = \Tr_{K(f)}(\alpha_1 \bar{\alpha_j}) \in \zed,$$
for all $1 \leq j \leq n$. This implies that the inner product of any two integer linear combinations of vectors $c^{j-1}(\baa)$, $1 \leq j \leq n$ is integral. Thus, we have an integral algebraic lattice $\Sigma_K(\M_f)$, where $\M_f = \spn_{\zed} \{ \alpha_1,\dots,\alpha_n \} \subset K$ is a free $\zed$-module spanned by the roots of $f(x)$ in $K$. This observation suggests the following construction.

Let $\tau \in S_n$ be a permutation, given by the disjoint cycle decomposition
$$\tau = c_1 \dots c_{\ell},$$
where each $c_i$ is a $k_i$-cycle, so that $\sum_{i=1}^{\ell} k_i = n$, and $\gcd(k_i,k_j) = 1$ for all $i \neq j$. For each $1 \leq i \leq \ell$, let $f_i(x) \in \zed[x]$ be an irreducible monic polynomial of degree $k_i$ such that $G(f_i) = \left< c_i \right>$ and its roots $\alpha_{i1},\dots,\alpha_{ik_i}$ are $\que$-linearly independent. Notice that for every $k_i \geq 2$ there exist infinitely such polynomials. Write $\baa_i = \left( \alpha_{i1}, \dots, \alpha_{ik_i} \right)^{\top} \in K(f_i)_{\real}$, and let 
$$\bx = \left( \baa_1, \dots, \baa_{\ell} \right) \in K(f_1)_{\real} \times \dots \times K(f_{\ell})_{\real}.$$
Since $\bx$ satisfies the hypothesis of Proposition~\ref{ind}, the lattice $L_{\tau}(\bx)$ has rank $\ot = n-\ell+1$ in the $n$-dimensional space Euclidean space $K(f_1)_{\real} \times \dots \times K(f_{\ell})_{\real}$, it is integral and consists of orthogonal cyclic blocks glued by the vector $\bx$, as described in Theorem~\ref{glue}.

Let us consider an example of this algebraic lattice construction. Let $\tau = (1\ 2) (3\ 4\ 5)$ and take
$$f_1(x) = x^2+x-1,\ f_2(x) = x^3-3x-1.$$
Both of these polynomials have cyclic Galois groups of orders 2 and 3, respectively, and the roots of these polynomials are
$$\alpha_{11} = \frac{-1+\sqrt{5}}{2},\ \alpha_{12} = \frac{-1-\sqrt{5}}{2},$$
and
$$\alpha_{21} = 2 \cos \left( \frac{\pi}{9} \right),\ \alpha_{22} = -2 \cos \left( \frac{2\pi}{9} \right),\ \alpha_{23} = -2 \cos \left( \frac{4\pi}{9} \right),$$
respectively. Taking the glue vector
$$\bx = \left( \frac{-1+\sqrt{5}}{2}, \frac{-1-\sqrt{5}}{2}, 2 \cos \left( \frac{\pi}{9} \right), -2 \cos \left( \frac{2\pi}{9} \right), -2 \cos \left( \frac{4\pi}{9} \right) \right)$$
in the space $\left( \que(\sqrt{5}) \otimes_{\que} \real \right) \times \left( \que\left( \cos \left( \frac{\pi}{9} \right) \right) \otimes_{\que} \real \right) \cong \real^5$, we obtain the lattice $L_{\tau}(\bx) \subset \real^5$ of rank 4 with basis matrix $B = \left( \bx\ \tau(\bx)\ \tau^2(\bx)\ \tau^3(\bx) \right)$ and the corresponding integral Gram matrix
$$C = B^{\top} B = \begin{pmatrix} 9 & -5 & 0 & 4 \\ -5 & 9 & -5 & 0 \\ 0 & -5 & 9 & -5 \\ 4 & 0 & -5 & 9 \end{pmatrix}.$$
Define the mutually orthogonal blocks $M_1 = \spn_{\zed} \left\{ \left( \sqrt{5}, -\sqrt{5}, 0, 0, 0 \right)^{\top} \right\} \subset \b1^{\top}_2 \times \real^3$ and $M_2 = $
$$= \spn_{\zed} \left\{ \begin{pmatrix} 0 \\ 0 \\ 2 \cos \left( \frac{\pi}{9} \right) + 2 \cos \left( \frac{4\pi}{9} \right) \\ -2 \cos \left( \frac{2\pi}{9} \right) - 2 \cos \left( \frac{\pi}{9} \right) \\ -2 \cos \left( \frac{4\pi}{9} \right) + 2 \cos \left( \frac{2\pi}{9} \right),  \end{pmatrix}, \begin{pmatrix} 0 \\ 0 \\ 2 \cos \left( \frac{\pi}{9} \right) + 2 \cos \left( \frac{2\pi}{9} \right) \\ -2 \cos \left( \frac{2\pi}{9} \right) + 2 \cos \left( \frac{4\pi}{9} \right) \\ -2 \cos \left( \frac{4\pi}{9} \right) - 2 \cos \left( \frac{\pi}{9} \right),  \end{pmatrix}\right\} \subset \real^2 \times \b1^{\top}_3,$$
then $L_{\tau}(\bx) = \spn_{\zed} \left\{ \bx, M_1 \oplus M_2 \right\}$ with $\det L_{\tau}(\bx) = 9 \sqrt{15}$ and successive minima $\lambda_1 = \lambda_2 = \sqrt{7}$, $\lambda_3 = \lambda_4 = 2\sqrt{2}$.

\bigskip

\section{Appendix: lattices spanned by algebraic conjugates}
\label{conj}

In this appendix, we want to revisit a construction of algebraic lattices described in~\cite{lf_ek}. Let
$$f(x) = x^n + a_{n-1}x^{n-1} + \dots + a_1x + a_0 \in \zed[x]$$
be a monic irreducible polynomial of degree $n \geq 2$ with roots $\alpha_1, \alpha_2,\dots, \alpha_n$. Let $K$ be its splitting field and $G$ its Galois group over $\que$. Then 
$$d := [K:\que] = |G| \leq n!.$$
Write $G = \left\{ \sigma_1,\dots,\sigma_d \right\}$, then for each $1 \leq i \leq n$ and $1 \leq j \leq d$, $\sigma_j(\alpha_i) = \alpha_{i_j}$ for some corresponding $1 \leq i_j \leq n$. Then
\begin{equation}
\label{sum_a}
-a_{n-1} = \alpha_1 + \dots + \alpha_n \in \zed,
\end{equation}
and
$$\M_f = \spn_{\zed} \left\{ \alpha_1,\dots,\alpha_n \right\} \subset K$$
is a $\zed$-module of rank $\leq n$ generated by all the conjugates of $\alpha$. The module $\M_f$ is closed under the action of $G$, since automorphisms of the Galois extension $K/\que$ simply permute the roots of an irreducible polynomial. Identifying elements of $G$ with the embeddings as in Section~\ref{nf_sec} above, we have the Minkowski embedding
$$\Sigma_K = (\sigma_1,\dots,\sigma_d) : K \hookrightarrow K_{\real}.$$
Writing $\baa$ for the image $\Sigma_K(\alpha) \in K_{\real}$ of $\alpha \in K$, we define the angle $\theta$ between $\baa$ and $\bbb$ is given by
$$\theta = \cos^{-1} \left( \frac{\left< \baa,\bbb \right>}{\|\baa\| \|\bbb\|} \right),$$
for each pair $\alpha,\beta \in K$, where $\|\ \baa \| = \left< \baa, \baa \right>$ is the Euclidean norm.

Define $L_f := \Sigma_K(\M_f)$, which is a Euclidean lattice in $K_{\real}$. Then
$$L_f = \spn_{\zed} \left\{ \baa_1,\dots,\baa_n \right\}.$$
Let $m = \rk L_f$. The {\it minimal norm} of this lattice is $|L_f| := \min \{ \|\bbb\| : \bo \neq \bbb \in L_f \}$, which is the same as the first successive minimum, and the {\it minimal vectors} are vectors with this minimal norm. We say that $L_f$ is {\it well-rounded} (WR) if it has $m$ $\real$-linearly independent minimal vectors. Further, $L_f$ is {\it generic WR} (GWR) if it is WR and the number of minimal vectors is precisely $2m$. Further, let $\B = \{ \bbb_1,\dots,\bbb_m \}$ be an ordered basis for $L_f$, and define a sequence of angles $\theta_1,\dots,\theta_{n-1}$ as follows: each $\theta_i$ is the angle between $\bbb_{i+1}$ and the subspace 
$$\spn_{\real} \{ \bbb_1,\dots,\bbb_i \}.$$
Then each $\theta_i \in [0,\pi/2]$ and we say that $\B$ is a {\it weakly nearly orthogonal} basis if $\theta_i \geq \pi/3$ for each $1 \leq i \leq n-1$. A basis $\B$ is called {\it nearly orthogonal} if every ordering of it is weakly nearly orthogonal. If $L_f$ has such a basis, we say that $L_f$ is a nearly orthogonal lattice. The following result is Theorem~1.5 of~\cite{lf_ek}.

\begin{thm} \label{thm1.5} Let $n \geq 3$ be prime. There exist infinitely many polynomials $f(x) \in \zed[x]$ of degree $n$ so that $L_f$ is a GWR nearly orthogonal lattice of rank $n$ in~$K_{\real}$, where~$K$ is the splitting field of~$f(x)$; further, $L_f$ contains a basis consisting of minimal vectors. For example, we can take $f(x) = x^n + a_{n-1} x^{n-1} + a_0$, where 
\begin{equation}
\label{tri}
|a_0| > \frac{8(n-1)n!}{\sqrt{(n-2)^2 + 16(n-1)} - (n-2)}
\end{equation}
and $|a_{n-1}| > |a_0| + 1$ are integers.
\end{thm}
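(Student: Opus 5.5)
The plan is to show that the Gram matrix of the generating vectors $\baa_1,\dots,\baa_n$ is strongly diagonally dominant, and then to read off the rank, the near orthogonality and the well-roundedness from that estimate. Throughout, $f(x) = x^n + a_{n-1}x^{n-1} + a_0$ with $|a_{n-1}| > |a_0|+1$ and $a_0$ satisfying~\eqref{tri}.

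\textbf{Step 1: root location and irreducibility.} By Perron's criterion, the inequality $|a_{n-1}| > |a_0|+1$ makes $f$ irreducible. A Rouch\'e argument on the unit circle then shows that $f$ has exactly one root, say $\alpha_1$, with $|\alpha_1|>1$, and that $\alpha_1$ is close to $-a_{n-1}$. The remaining roots $\alpha_2,\dots,\alpha_n$ lie in the open unit disc. Moreover $\prod_{k\geq 2}|\alpha_k| = |a_0|/|\alpha_1|$, so these small roots are controlled in terms of $|a_0|$ and $|a_{n-1}|$.

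\textbf{Step 2: the rank is $n$.} Since $n$ is prime and $G$ acts transitively on the roots, $G$ contains an $n$-cycle. The $\que$-span of the roots is a $G$-quotient of the permutation module $\que^n = \que\b1 \oplus \b1^{\perp}$. Restricted to the $n$-cycle, $\b1^{\perp}$ is isomorphic to $\que[x]/(\Phi_n)$, which is irreducible. Hence this quotient has dimension $0$, $1$, $n-1$ or $n$.
\begin{itemize}
\item Dimension $0$ or $1$ would force all roots to be equal, which is impossible since $f$ is separable.
\item Dimension $n-1$ would kill the trivial summand, i.e. force $\sum_k \alpha_k = 0$. But this sum equals $-a_{n-1} \neq 0$ by~\eqref{sum_a}.
\end{itemize}
Therefore $\rk L_f = n$.

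\textbf{Step 3: Gram estimates.} Next I compute $\left<\baa_i,\baa_j\right> = \sum_{\sigma\in G}\sigma(\alpha_i)\overline{\sigma(\alpha_j)}$. By transitivity, exactly $d/n$ elements $\sigma \in G$ send $\alpha_i$ to the large root $\alpha_1$.
\begin{itemize}
\item All diagonal entries are equal: $\|\baa_i\|^2 = \frac{d}{n}\sum_k|\alpha_k|^2$. This is at least $\frac{d}{n}|\alpha_1|^2$.
\item For $i\neq j$, no $\sigma$ sends both $\alpha_i$ and $\alpha_j$ to $\alpha_1$. So every term in the off-diagonal sum is either (large)$\times$(small) or (small)$\times$(small).
\item Bounding crudely with $d\leq n!$ gives $|\left<\baa_i,\baa_j\right>| \leq 2\frac{d}{n}|\alpha_1|\max_{k\ge2}|\alpha_k| + d\max_{k\ge 2}|\alpha_k|^2$.
\end{itemize}
I then need $|\cos\theta_{ij}| < \frac{1}{2(n-1)}$ or a comparable bound. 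This is the main obstacle: the small roots must be bounded tightly enough, using Step 1, that this ratio falls below the threshold. I expect that solving the resulting quadratic inequality in $|a_0|$ is exactly what produces the constant $\frac{8(n-1)n!}{\sqrt{(n-2)^2+16(n-1)}-(n-2)}$ in~\eqref{tri}. I would try first to use the bound $|\alpha_k| \le |a_0|/(|a_{n-1}|-1)$ for $k\geq 2$, which follows from $\alpha_k^{n-1}(\alpha_k+a_{n-1}) = -a_0$.

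\textbf{Step 4: near orthogonality and WR.} With equal norms and all pairwise cosines small, the projection of each $\baa_{i+1}$ onto $\spn_{\real}\{\baa_1,\dots,\baa_i\}$ can be estimated via the inverse of the Gram matrix of $\baa_1,\dots,\baa_i$, which is a small perturbation of a scalar matrix. This gives angle $\theta_i \geq \pi/3$ for every ordering, so $\{\baa_1,\dots,\baa_n\}$ is a nearly orthogonal basis. Finally I invoke the standard fact that in a nearly orthogonal basis every vector $\sum m_i\baa_i$ with two or more nonzero coefficients is at least as long as the shortest basis vector, with strict inequality once the angles exceed $\pi/3$. Since all $\|\baa_i\|$ are equal, the minimal vectors are exactly $\pm\baa_i$. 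Hence $L_f$ is GWR with a basis of minimal vectors, and infinitely many admissible pairs $(a_0,a_{n-1})$ exist.
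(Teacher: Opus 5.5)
Your route is sound, and for the rank it differs from the paper's. The paper, following \cite{lf_ek}, gets $\que$-linear independence of $\alpha_1,\dots,\alpha_n$ by citing Dubickas's theorem, which needs $a_{n-1}\neq 0$ and $n$ prime. You prove it directly. An $n$-cycle in $G$ splits the permutation module $\que^n$ into the trivial line and $\que[x]/(\Phi_n)$, which are two non-isomorphic simple summands. So the kernel of $e_i\mapsto\alpha_i$ is one of four submodules, and separability together with $\sum_k\alpha_k=-a_{n-1}\neq0$ leaves only the zero kernel. This is correct and self-contained. It also isolates what is really used: that the sum-zero submodule of $\que^n$ is $\que G$-irreducible. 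That holds equally for doubly transitive $G$, e.g.\ $G=S_n$, so the same argument would give the rank claim in Corollary~\ref{cor1.5} without citing Smyth. The citation buys brevity; your argument shows exactly where primality (or $2$-transitivity) enters.

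Step 3 contains a false inequality, though you do not need it. From $\alpha_k^{n-1}(\alpha_k+a_{n-1})=-a_0$ you only get $|\alpha_k|^{n-1}<|a_0|/(|a_{n-1}|-1)$, not $|\alpha_k|\le|a_0|/(|a_{n-1}|-1)$. The latter fails, e.g., for $n=3$, $a_0=21$, $a_2=10^6$, where the two small roots have modulus about $4.6\cdot 10^{-3}$.

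The ``main obstacle'' is illusory. Divide your off-diagonal bound by the diagonal entry $\frac dn\sum_k|\alpha_k|^2\ge\frac dn|\alpha_1|^2$: the factor $d$ cancels, so $d\le n!$ is never used. Using only $s=\max_{k\ge2}|\alpha_k|<1$ from Rouch\'e, you get
\[
|\cos\theta_{ij}|\le\frac{2s}{|\alpha_1|}+\frac{ns^2}{|\alpha_1|^2}<\frac{n+2}{|a_0|}\le\frac{n!}{|a_0|},
\]
provided $|\alpha_1|>|a_0|$. You asserted this (``close to $-a_{n-1}$'') without proof. It holds because for $1\le|z|=r\le|a_{n-1}|-1$ one has $|f(z)|\ge r^{n-1}(|a_{n-1}|-r)-|a_0|\ge|a_{n-1}|-1-|a_0|>0$. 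Here $r^{n-1}(|a_{n-1}|-r)$ is unimodal on that interval and both endpoint values are at least $|a_{n-1}|-1$. Hence $|\alpha_1|>|a_{n-1}|-1>|a_0|$.

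The quadratic behind \eqref{tri} comes from Step 4, not from bounding roots. Let $\epsilon$ bound the pairwise $|\cos\theta_{kl}|$. By Gershgorin, the normalized Gram matrix of any $i$ of the $\baa_k$ has least eigenvalue at least $1-(i-1)\epsilon$. So in every ordering $\cos^2\theta_i\le\frac{(n-1)\epsilon^2}{1-(n-2)\epsilon}$, which is $<\frac14$ iff $4(n-1)\epsilon^2+(n-2)\epsilon<1$. With $\epsilon=n!/|a_0|$ this is exactly \eqref{tri}.

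With these repairs all angles exceed $\pi/3$ strictly, and your Step 4 conclusion goes through: the minimal vectors are exactly $\pm\baa_i$, so $L_f$ is GWR with a basis of minimal vectors.
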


Here, we prove the following strengthening of this theorem.

\begin{cor} \label{cor1.5} Let $n \geq 3$, not necessarily prime, and let the rest of the notation be as in Theorem~\ref{thm1.5}. There exist infinitely many trinomials $f(x) = x^n + a_{n-1} x^{n-1} + a_0$ satisfying conditions~\eqref{tri} and $|a_{n-1}| > |a_0| + 1$ as in Theorem~\ref{thm1.5} such that $L_f$ is GWR nearly orthogonal with a basis of minimal vectors.
\end{cor}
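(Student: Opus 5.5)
The plan is to locate where primality of $n$ enters the proof of Theorem~\ref{thm1.5} and remove that dependence. I expect it is used only to guarantee $\rk L_f = n$, i.e. that the conjugates $\alpha_1,\dots,\alpha_n$ are $\que$-linearly independent. For prime $n$ the only possible $\que$-linear relation among the conjugates is a multiple of $\alpha_1+\dots+\alpha_n=0$, and by~\eqref{sum_a} this sum equals $-a_{n-1}\neq 0$. The remaining ingredients of Theorem~\ref{thm1.5} are the norm and inner product estimates
$$\|\baa_i\|^2 = \sum_{\sigma\in G}|\sigma(\alpha_i)|^2, \qquad \left<\baa_i,\baa_j\right>.$$
These use only the sizes of the roots of $x^n+a_{n-1}x^{n-1}+a_0$: one root is close to $-a_{n-1}$ and the other $n-1$ are small, of size roughly $|a_0/a_{n-1}|^{1/(n-1)}$. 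They also use the count $d\le n!$, which is where $n!$ in~\eqref{tri} comes from. None of this needs $n$ prime. So once linear independence holds, those estimates give GWR, near orthogonality, and a basis of minimal vectors verbatim. I will re-read that argument to confirm this and then supply linear independence by a different route.

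For general $n$, I would restrict to trinomials with Galois group $G\cong S_n$ acting on the roots. Let
$$R=\left\{(q_1,\dots,q_n)\in\que^n : \sum_i q_i\alpha_i=0\right\}$$
be the space of relations. It is a $G$-stable subspace of the permutation module $\que^n$. For $S_n$, this module decomposes as the trivial line $\que\b1$ plus the irreducible $(n-1)$-dimensional module $\b1^\perp$, so its only submodules are $0$, $\que\b1$, $\b1^\perp$ and $\que^n$.
\begin{itemize}
\item $R\supseteq \que\b1$ is impossible, since $\sum\alpha_i=-a_{n-1}\neq 0$.
\item $R\supseteq \b1^\perp$ would force $\alpha_i-\alpha_j=0$ for all $i,j$, contradicting separability of an irreducible polynomial.
\end{itemize}
Hence $R=0$ and $\rk L_f=n$.

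It remains to produce infinitely many admissible trinomials with Galois group $S_n$. I would fix an integer $a_0$ satisfying~\eqref{tri} and regard $F(t,x)=x^n+tx^{n-1}+a_0$ over $\que(t)$. Its reversal is $a_0y^n+ty+1$, so up to a M\"obius change this is a generic trinomial family. Its Galois group over $\que(t)$ should be $S_n$: the discriminant in $t$ has simple roots, the branch points give transpositions, and the group is doubly transitive. Alternatively, one can cite Osada's theorem on Galois groups of trinomials. By Hilbert's irreducibility theorem, the set of integers $t$ for which the specialization fails to be irreducible with group $S_n$ is thin, so it has density zero in $\zed$. Therefore infinitely many integers $a_{n-1}$ with $|a_{n-1}|>|a_0|+1$ give $f(x)$ irreducible with $G\cong S_n$. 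Choosing infinitely many values of $a_0$ as well yields infinitely many distinct trinomials.

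The main obstacle is this Galois group step: establishing rigorously that the generic member $x^n+tx^{n-1}+a_0$ has group $S_n$ over $\que(t)$, or verifying the hypotheses of Osada's criterion for a concrete infinite family such as $\gcd(a_{n-1},a_0)=1$ with suitable prime conditions. A secondary point is to check carefully that the proof of Theorem~\ref{thm1.5} nowhere else uses primality, for instance in bounding $d$ or in the structure of $G$.
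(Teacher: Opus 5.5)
Your proposal is correct and is essentially the paper's proof. The paper likewise observes that primality enters only through $\rk L_f = n$, and for $a_{n-1}\neq 0$ and $G=S_n$ it gets this by citing Smyth's Lemma~1~\cite{smyth}; your splitting of the permutation module $\que^n$ into the trivial line and the irreducible sum-zero submodule is a self-contained proof of exactly that case. For infinitely many admissible trinomials with group $S_n$ the paper simply cites Osada~\cite{trinomial}, so your Hilbert-irreducibility alternative is not needed. That route does also work, because the inertia over $t=\infty$ is an $(n-1)$-cycle, and together with transitivity this gives the double transitivity you asserted.
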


\proof
The proof is the same as the proof of Theorem~1.5 of~\cite{lf_ek} with one important modification. The only reason the assumption that $n$ is prime was needed was to ensure that rank of $L_f$ is equal to $n$, i.e., that the conjugates $\alpha_1,\dots,\alpha_n$ are $\que$-linearly independent. For this,~\cite{lf_ek} used a theorem of Dubickas (Theorem~1 of~\cite{dub}) that guarantees this linear independence under the conditions $a_{n-1} \neq 0$ and $n$ being prime. The condition $a_{n-1} \neq 0$ is clearly necessary, however $n$ being prime can be replaced by a weaker condition. Specifically, if $a_{n-1} \neq 0$ and $G=S_n$, then a result of C. J. Smyth (Lemma~1 of~\cite{smyth}) implies that the conjugates $\alpha_1,\dots,\alpha_n$ are $\que$-linearly independent, and hence our lattice $L_f$ has rank $n$ in $n!$-dimensional Euclidean space~$K_{\real}$. Now, the fact that there exist infinitely many integer trinomials $f(x) = x^n + a_{n-1} x^{n-1} + a_0$ satisfying conditions~\eqref{tri} and $|a_{n-1}| > |a_0| + 1$ as in Theorem~\ref{thm1.5} with Galois group equal to $S_n$ follows, for instance, from~\cite{trinomial}.
\endproof

\bigskip

\bibliographystyle{plain}  

\begin{thebibliography}{10}

\bibitem{conway_sloane}
J.~H. Conway and N.~J.~A. Sloane.
\newblock{Sphere packings, lattices and groups, 3rd edition},
\newblock{Springer-Verlag, 1999.}

\bibitem{dub}
A. Dubickas.
\newblock On the degree of a linear form in conjugates of an algebraic number.
\newblock {\em Illinois J. Math.}, 46(2):571--585, 2002.

\bibitem{perm}
L. Fukshansky, S. R. Garcia and X. Sun.
\newblock Permutation invariant lattices.
\newblock {\em Discrete Math.}, 338(8):1536--1541, 2015.

\bibitem{lf_ek}
L. Fukshansky and E. Knight.
\newblock On lattices generated by algebraic conjugates of prime degree.
\newblock {\em Illinois J. Math.}, 70(1):193--211, 2026.

\bibitem{lf_dk}
L. Fukshansky and D. Kogan.
\newblock Cyclic and well-rounded lattices.
\newblock {\em Mosc. J. Comb. Number Theory}, 11(1):79--96, 2022.

\bibitem{macwillians_sloane}
F. J. MacWilliams and N.~J.~A. Sloane.
\newblock{The theory of error-correcting codes.},
\newblock{North-Holland Mathematical Library, Vol. 16. North-Holland Publishing Co., Amsterdam-New York-Oxford, 1977.}

\bibitem{mic1}
D.~Micciancio.
\newblock Generalized compact knapsacks, cyclic lattices, and efficient one-way functions from worst-case complexity assumptions.
\newblock {\em FOCS, IEEE Computer Society}, pages 356--365, 2002.

\bibitem{mic2}
D.~Micciancio.
\newblock Generalized compact knapsacks, cyclic lattices, and efficient one-way functions.
\newblock {\em Comput. Complexity}, 16(4):365--411, 2007.

\bibitem{trinomial}
H. Osada.
\newblock The Galois groups of the polynomials $X^n+aX^l+b$.
\newblock {\em J. Number Theory}, 25(2):230--238, 1987.

\bibitem{gqc}
I. Siap and N. Kulhan.
\newblock The structure of generalized quasi cyclic codes.
\newblock {\em Appl. Math. E-Notes}, 5:24--30, 2005.

\bibitem{smyth}
C. J. Smyth.
\newblock Additive and multiplicative relations connecting conjugate algebraic numbers.
\newblock {\em J. Number Theory}, 23(2):243--254, 1986.

\bibitem{d_wan}
D. Wan.
\newblock Euclidean SVP is NP-hard for Cyclic Lattices.
\newblock {\em arXiv:2609.16711v1}, 2026.

\end{thebibliography}

\end{document}